\documentclass[11pt]{amsart}
\usepackage[margin=1.17in]{geometry}

\usepackage{amssymb}
\usepackage{amsthm}
\usepackage{amsmath}
\usepackage{mathrsfs}
\usepackage{amsbsy}
\usepackage[all]{xy}
\usepackage{bm}
\usepackage{hyperref}
\usepackage{tikz}
\usepackage{array}
\usepackage{float}
\usepackage{enumerate}
\usepackage{xcolor}
\usepackage{hhline}
\usepackage{mathtools}
\allowdisplaybreaks
\usepackage[noadjust]{cite}

\usepackage{caption}
\usepackage{subcaption}
\usepackage{diagbox}
\usepackage{tikz}
\usepackage{bbm}
\usepackage{booktabs}

\usepackage[noabbrev,capitalise]{cleveref}

\newenvironment{enumerate*}%
{\begin{enumerate}[(I)]%
\setlength{\itemsep}{10pt}%
\setlength{\parskip}{0pt}}%
{\end{enumerate}}

\newtheorem{theorem}{Theorem}[section]
\newtheorem{proposition}[theorem]{Proposition}
\newtheorem{corollary}[theorem]{Corollary}
\newtheorem{conjecture}[theorem]{Conjecture}

\newtheorem{lemma}[theorem]{Lemma}

\theoremstyle{definition}

\DeclareMathOperator{\Sym}{Sym}

\title{Graham conjecture on small sets in abelian groups}

\author[]{Simone Costa}
\address[Simone Costa]{DICATAM, Universit\`a degli Studi di Brescia}
\email{simone.costa@unibs.it}
\author[]{Stefano Della Fiore}
\address[Stefano Della Fiore]{DII, Universit\`a degli Studi di Brescia}
\email{stefano.dellafiore@unibs.it}
\author[]{Mattia Fontana}
\address[Mattia Fontana]{Department of Engineering and Sciences, Universitas Mercatorum}
\email{mattia.fontana@studenti.unimercatorum.it}
\author[]{Lluís Vena}
\address[Lluís Vena]{Department of Mathematics,
Universitat Politècnica de Catalunya}
\email{lluis.vena@upc.edu}

\keywords{Sequenceability}
\subjclass{11B75}

\begin{document}

\begin{abstract}
A famous conjecture of Graham asserts that every set $A \subseteq \mathbb{Z}_p \setminus \{0\}$ can be ordered so that all partial sums are distinct.
Although this conjecture was recently proved for sufficiently large primes by Pham and Sauermann in~\cite{PM} (combined with earlier results of \cite{BBKMM}), it remains open for general abelian groups, even in the cyclic case $\mathbb{Z}_k$.

In this paper, using a recursive approach, we investigate the sequenceability of subsets $A$ in generic abelian groups for small values of $|A|$. We prove that any subset $A \subseteq G\setminus\{0\}$ with $|A| \leq 20$ is sequenceable where previously it was known only for $|A|\leq 9$. This bound is improved to $|A| \leq 22$ for zero-sum subsets. Finally, regarding the related CMPP conjecture, we show that zero-sum subsets without inverse pairs are sequenceable for $|A| \leq 23$.
\end{abstract}
\maketitle

\section{Introduction}
Let $A$ be a finite subset of an abelian group $(G,+)$. We say that an ordering
$a_1, \ldots, a_{|A|}$ of $A$ is \emph{valid} if its partial sums
$p_1=a_1, p_2=a_1+a_2, \ldots, p_{|A|}=a_1 + \cdots + a_{|A|}$ are pairwise distinct.
Moreover, this ordering is \emph{sequencing} if it is valid and $p_i \neq 0$
for every $1 \le i \le |A|-1$. A set is said to be \emph{sequenceable} if it admits a sequencing ordering.

In the literature, there are several conjectures about valid  and sequencing orderings.
We refer to \cite{CMPP18, OllisSurvey, PD} for an overview of the topic,
\cite{AKP, AL20, ADMS16, CDOR} for lists of related conjectures,
and \cite{BFMPY} for a treatment using rainbow paths.

Here, we first recall the conjecture Graham posed for groups $\mathbb{Z}_p$ (and more generally, posed for abelian groups by Alspach).

\begin{conjecture}[Graham/Alspach conjecture, see \cite{GR}, \cite{EG} and \cite{AL20}]\label{conj:main}
Let $G$ be an abelian group. Then every subset $A \subseteq G \setminus\{0\}$ is sequenceable.
\end{conjecture}
The following related conjecture was posed by Costa, Morini, Pasotti and Pellegrini keeping in mind applications to Heffter arrays.
\begin{conjecture}[CMPP conjecture, see \cite{CMPP18}]\label{conj:main2}
Let $G$ be an abelian group. Then every subset $A \subseteq G \setminus\{0\}$ whose elements sum to zero and such that $\{x,-x\}\not\subset A$ for any $x\in G$, is sequenceable.
\end{conjecture}

Until recently, the main results on these conjectures were for small values of $|A|$;
in particular, in~\cite{CDOR}, the Graham's conjecture was proved, using the Combinatorial Nullstellensatz theorem, for sets $A\subseteq \mathbb{Z}_p$ of size at most $12$.
For generic abelian groups $G$ the best result is due to Alspach and Liversidge (\cite{AL20}) who prove, via a brute-force poset-approach, the conjecture for sets $A\subseteq G\setminus\{0\}$ of size at most $9$. 
Regarding the CMPP conjecture, it was proved for sets $A\subseteq \mathbb{Z}_p$ of size at most $13$ and for sets $A\subseteq G\setminus\{0\}$ of size at most $10$.

The first result involving arbitrarily large sets $A$ was presented by Kravitz~\cite{NK},
who used a rectification argument to show that Graham's conjecture holds for all sets $A$ of size
$|A|\le \log p/\log\log p$. 
Sawin~\cite{Sawin} also proposed a similar argument (but was not published). This bound was then improved in \cite{BK} by Bedert and Kravitz, and, very recently, by Costa and Della Fiore in \cite{CD26}, who proved, again using a rectification based approach that:
\begin{theorem}[\cite{CD26}]\label{thm:mainBK}
Let $p$ be the least prime divisor of $k$. Then there exists a constant $c>0$ such that every subset $A \subseteq \mathbb{Z}_k \setminus\{0\}$
is sequenceable provided that
\[
|A| \leq \exp\!\big(c(\log p)^{1/3}\big).
\]
\end{theorem}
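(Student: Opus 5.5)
The natural plan is a rectification argument: transfer $A$ from $\mathbb{Z}_k$ into the integers (or a torsion-free group), sequence it there, and pull the ordering back. The key input is a Freiman-type rectification lemma for sets in $\mathbb{Z}_k$ whose size is tiny compared with the least prime divisor $p$ of $k$. The rectification should produce a dilation $\lambda\in\mathbb{Z}_k^{*}$ such that $\lambda\cdot A$ lies in a short interval $(-k/(2m),k/(2m))$ around $0$, where $m=|A|$. If that holds, then every subset sum of $\lambda\cdot A$ has integer representative of absolute value $<k/2$. Consequently, distinctness (and non-vanishing) of partial sums in $\mathbb{Z}$ implies the same in $\mathbb{Z}_k$. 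Since multiplication by a unit preserves sequencings, it then suffices to sequence the integer set $\lambda A\subseteq\mathbb{Z}\setminus\{0\}$.

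The integer case I would settle directly. Split the integer set into positives $P$ and negatives $N$. If $\sum A\ge 0$, say, build the ordering greedily: add a positive element whenever the current partial sum is $\le 0$ and a negative one otherwise, choosing the order within $P$ and $N$ so that no partial sum repeats. Alternatively, invoke the known fact (Graham/Alspach for torsion-free groups) that sets of nonzero integers are sequenceable. This step is classical, and I would cite it rather than reprove it.

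The main obstacle is the rectification step with the quantitative bound $m\le \exp(c(\log p)^{1/3})$ in a general $\mathbb{Z}_k$, where $k$ need not be prime. Shrinking the set fully into a tiny interval by a single dilation is impossible in general. So I would do the following:

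\begin{enumerate}
\item Use a Bohr-set or geometry-of-numbers argument: by Dirichlet/Minkowski, find a dilation placing $A$ into a generalized arithmetic progression of bounded dimension $d$ and controlled side lengths.
\item Exploit that $p$ is large, so no nontrivial subgroup obstructs the argument, since every nonzero element has order $\ge p$.
\item Perform a Freiman isomorphism of order $m$ from $A$ into $\mathbb{Z}^d$.
\item Sequence in $\mathbb{Z}^d$, which is torsion-free and hence handled by an ordered-group greedy argument such as a lexicographic order.
\end{enumerate}

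Balancing $d$ against the side lengths is presumably where the $(\log p)^{1/3}$ exponent arises. In practice this is the content of \cite{CD26}, refining \cite{BK}, so within this paper I would quote \cite{CD26} for the rectification lemma and only check that its hypotheses match $|A|\le\exp(c(\log p)^{1/3})$.

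One subtlety remains. A Freiman isomorphism of order $m$ must preserve equalities between partial sums of different lengths. I would handle this by preserving all relations $\sum_{i\in I}a_i=\sum_{j\in J}a_j$ for $|I|,|J|\le m$, together with relations to $0$, by taking the isomorphism of order $2m$.
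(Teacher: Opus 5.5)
The paper does not prove this statement. It quotes it from \cite{CD26}, remarking only that the argument there is rectification based, so citing \cite{CD26} is fine. The problem is the mechanism you attribute to \cite{CD26}: one Freiman-type map taking all of $A$ into a torsion-free group ($\mathbb{Z}$ or $\mathbb{Z}^d$) that is injective, avoids $0$, and preserves the relevant relations. Such a map does not exist in general once $|A|$ is a large constant times $\log p$, already for $k=p$ prime.

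For your order-$2m$ formulation, take $A=\{1,2,4,\dots,2^{m-1}\}\subseteq\mathbb{Z}_p$ with $2^{m-1}<p<2^m$. A Freiman isomorphism $\phi$ of $A\cup\{0\}$ with $\phi(0)=0$ must respect $2^i+2^i=2^{i+1}+0$, so $\phi(2^i)=2^i\phi(1)$. It must also respect $\sum_{i\in I}2^i=0$, where $I$ is the binary support of $p$ (a relation of order $|I|\le m$). Hence $p\,\phi(1)=0$ and $\phi(1)=0=\phi(0)$.

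Even the weakest property you actually need also fails for sets of size $O(\log p)$; that property is that every zero-sum subset of $A$ goes to a zero-sum subset. Let $D\subseteq[1,(p-1)/2]$ consist of $1,\dots,5$ together with all $2^i$, $3\cdot 2^i$, $5\cdot 2^i$ up to $(p-1)/2$, and let $A=D\cup(-D)$. The zero-sum pairs $\{d,-d\}$ and triples $\{x,y,-(x+y)\}$ force $\phi(d)=d\,\phi(1)$ on $D$. A subset of $D$ with integer sum exactly $p$ exists for large $p$, and it then forces $\phi(1)=0$. The reason is structural: with $2^{|A|}\gg p$ subset sums, zero-sum coincidences are forced and typically admit no torsion-free model.

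Since $\exp(c(\log p)^{1/3})$ eventually exceeds every constant multiple of $\log p$, no choice of dilation, bounded-dimensional progression, or Freiman order can prove the theorem along your lines. In particular, the exponent $1/3$ does not come from balancing $d$ against side lengths. There is also no full rectification lemma in \cite{CD26} whose hypotheses you could simply check.

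Whole-set rectification stops at scale $\log p$; Kravitz's $\log p/\log\log p$ is essentially its limit. This is the ``rectification barrier'' that \cite{BK} had to pass. The missing idea is an argument that does not require all of $A$ to have a torsion-free model. Roughly, one rectifies only part of a suitable dilate of $A$, and orders the remaining elements, whose relations cannot be modelled in $\mathbb{Z}$, by a separate combinatorial argument.
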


As mentioned above, the most impressive result on sequenceability problems was proved in~\cite{PM}, where, combined with earlier results of \cite{BBKMM}, they settled Graham's conjecture on $\mathbb{Z}_p$ for all sufficiently large primes $p$. This result is a consequence of anticoncentration inequalities developed using a discrete Fourier approach that seems hard to adapt to the cyclic case. Moreover, the condition that $p$ is sufficiently large leaves open all small primes.

In this paper, we investigate Conjectures \ref{conj:main} and \ref{conj:main2} for small values of $|A|$ and in generic abelian groups $G$. In particular, using a recursive approach, we have been able to prove the following.
\begin{theorem}\label{thm:main}
Let $G$ be an abelian group. Then every subset $A \subseteq G\setminus\{0\}$
is sequenceable provided that
\[
|A| \leq 20.
\]
\end{theorem}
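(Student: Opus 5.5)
We plan to argue by a recursive (inductive) construction combined with a finite, computer-assisted case analysis. The basic mechanism is as follows. Suppose $A\subseteq G\setminus\{0\}$ with $|A|=n\le 20$. We try to build a sequencing by first fixing a short \emph{tail}: a few elements $b_1,\dots,b_t$ of $A$ placed at the end. The remaining set $A'=A\setminus\{b_1,\dots,b_t\}$ is then ordered, by induction, with a sequencing (or a valid ordering) of size $n-t$. Its last partial sum is forced to be $\sigma(A')=\sigma(A)-\sum b_i$. The tail is then appended.

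Appending the tail can only fail in a limited number of ways:
\begin{itemize}
\item a tail partial sum $\sigma(A')+b_1+\dots+b_j$ coincides with a partial sum of the prefix;
\item some partial sum is $0$.
\end{itemize}
Since the prefix partial sums can be chosen with some freedom, we will strengthen the induction hypothesis. Namely, we will prove that for $|A|\le n$ the set $A$ admits a sequencing that \emph{avoids} a prescribed small set of forbidden values $F$ at all but the last partial sum, with $|F|$ bounded by a function of $|A|$.

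Concretely, the steps are as follows.

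\textbf{Step 1: base cases.} Use the Alspach--Liversidge result for $|A|\le 9$, strengthened to the forbidden-value version. This check is finite, because only the additive relations among the elements of $A\cup F$ matter.

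\textbf{Step 2: reduction step.} Remove a pair or triple of elements chosen cleverly, for instance a pair $\{x,y\}$ with $x+y\neq 0$ and with $x,y$ not creating coincidences with $\sigma(A)$. The forbidden set for the smaller instance then consists of the values $\sigma(A)-b_t$, $\sigma(A)-b_t-b_{t-1}$, and so on, together with the inherited forbidden values.

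\textbf{Step 3: counting to guarantee a good tail.} Show by counting that some choice of tail produces a forbidden set the inductive hypothesis can handle. Each bad event, such as a tail partial sum being $0$ or colliding with a previously fixed value, excludes only $O(1)$ choices of $b_j$ for each fixed earlier choice. With $n$ candidates per tail position, a tail exists once $n$ exceeds a linear bound in $|F|$.

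The main obstacle is the balance between the size of the forbidden set and the available choices. Each recursive step enlarges $F$, so a purely counting argument will stall somewhere before $20$. To get past it, I would first try to exploit the abelian structure: elements of order $2$ and inverse pairs $\{x,-x\}$ are the only sources of collisions that cannot be avoided by choice. These can be handled separately by grouping inverse pairs together, or by placing an involution at a specific position.

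For the residual configurations that counting does not resolve, the plan is an exhaustive computer search over \emph{abstract} relation patterns rather than over concrete groups. A counterexample is determined by which signed subset sums of the relevant elements vanish. By the recursion, only boundedly many elements, roughly the tail plus $F$, are involved at each stage, so the search space stays finite and feasible up to $n=20$.
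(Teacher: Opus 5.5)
\textbf{There is a genuine gap: the strengthened induction hypothesis is false, and the computational fallback is not a finite procedure.}

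\emph{The hypothesis fails.} Take $A=\mathbb{Z}_4\setminus\{0\}=\{1,2,3\}$. Its only sequencings are $(1,2,3)$ and $(3,2,1)$, and both have non-final partial sums $\{1,3\}$, so $F=\{1\}$ cannot be avoided. More generally, suppose $A\cup\{0\}=H$ is a subgroup. Then $p_1,\dots,p_{k-1}$ are $k-1$ distinct nonzero elements of $H$, missing only one of them. Hence any $F$ containing two nonzero elements of $H$ is unavoidable; already $|A|=2$, $F=A$ fails. Your recursion adds at least one forbidden value per level: the prefix must avoid $\sigma(A)$, the next prefix also $\sigma(A')$, and so on. So you need $|F|\ge 2$ almost immediately. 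The ``forbidden-value version'' of Alspach--Liversidge you want as a base case is not a known result and is false in general. To repair this you would have to restrict $F$ to the shape your tails produce, identify the exceptional pairs $(A,F)$, and propagate them through the recursion. None of this is specified.

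\emph{The counting and search do not close the gap.} The counting in Step 3 does not help: $F$ grows by about $t$ while $A'$ shrinks by $t$. Counting therefore only carries you until $|A'|\approx|F|$, roughly halfway. The ``residual'' search then has to do all the work on sets of size about $10$ with about $10$ forbidden values. At that point your claim that only boundedly many elements are involved is unfounded. Whether $(A',F)$ is good depends on zero-sum relations among arbitrary consecutive blocks of $A'$, not only on involutions and inverse pairs. So you have not described any finite certificate.

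\emph{The missing idea.} You need a reduction that keeps the hypothesis in its original form and leaves a finite, group-independent check; the paper gets this by merging two elements instead of removing a tail.
\begin{itemize}
\item By the Characterization Theorem, if $a_i+a_j\in A\cup\{0\}$ for all distinct $i,j$, then $A\cup\{0\}$ is a subgroup, and $H\setminus\{0\}$ is sequenceable by a cited result.
\item Otherwise some $a_i+a_j\notin A\cup\{0\}$. Then $A'=(A\setminus\{a_i,a_j\})\cup\{a_i+a_j\}$ is again a set of $k-1$ distinct nonzero elements, sequenceable by plain induction.
\item Fix a sequencing $x_1,\dots,x_{k-1}$ of $A'$ with $a_1+a_2=x_i$. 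Lifting it to a sequencing of $A$ is verified by a tree search. Each assumed collision is recorded as a $0/1$ relation on the $k$ labels.
\item Branches are closed by rank tests over $\mathbb{Q}$ that yield $a_i=0$, $a_i=a_j$, or a vanishing block sum forbidden by the known sequencing of $A'$.
\end{itemize}
That last certificate is exactly the extra information your scheme has no counterpart for. Without it, the paper reports that the search stalls at $|A|\le 10$.
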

If we assume, moreover, that $\sum_{x\in A}x=0$, we can improve this bound.
\begin{theorem}\label{sumzero}
Let $G$ be an abelian group and let $A \subseteq G\setminus\{0\}$ be such that $\sum_{x\in A}x=0$.
Then $A$ is sequenceable provided that
\[
|A| \leq 22.
\]
\end{theorem}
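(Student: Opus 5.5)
The plan is to prove Theorem~\ref{sumzero} with the same recursive method used for Theorem~\ref{thm:main}, and to exploit the zero-sum hypothesis to gain two extra elements. The key observation is that when $\sum_{x\in A}x=0$ the final partial sum $p_{|A|}$ equals $0$. Hence a sequencing of $A$ is exactly an ordering whose first $|A|-1$ partial sums are distinct and nonzero. Equivalently, the prefix sums $p_0=0,p_1,\dots,p_{|A|-1}$ are pairwise distinct. This is a cyclic condition: every cyclic rotation of a sequencing is again a sequencing. So one endpoint comes for free, and we may fix any chosen element $a\in A$ to be placed last.

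Removing that last element, the problem reduces to the following. Find an ordering of $A'=A\setminus\{a\}$, with $|A'|\le 21$, whose partial sums are distinct and nonzero, where the total $\sum A'=-a$ is automatically nonzero. In other words, we need a sequencing of $A'$, a set with nonzero sum, with no extra constraint. This alone gives the statement for $|A|\le 21$ directly from Theorem~\ref{thm:main}.

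To reach $22$, I would peel off a second element. Write $A'=B\cup\{b\}$ and place $b$ last in $A'$, so that it is second to last in $A$. Then we need a sequencing of $B$, with $|B|\le 20$, together with the single extra condition that the prefix sums of $B$ avoid one forbidden value. Indeed, since $\sum B=-a-b$, the final partial sum $-a$ of $A'$ is automatically nonzero and distinct from the prefix sums of $B$ unless $-a$ itself appears among them. So the requirement is that the prefix sums of $B$ avoid $\{0,-a\}$, except that the last one must equal $-a-b$, which is fine.

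The freedom lies in choosing the pair $(a,b)$. There are $|A|(|A|-1)$ ordered choices, and the choice of $b$ can be adapted to whatever sequencing of $B$ appears. The recursive machinery behind Theorem~\ref{thm:main} should be applied in a strengthened form: every set of size at most $20$ not containing $0$ admits a sequencing avoiding one prescribed additional nonzero value $t$ among its partial sums, perhaps provided $t$ lies outside a small exceptional set. I would formulate this as a lemma, proved by the same recursion as Theorem~\ref{thm:main}, counting forbidden configurations and showing at least one ordering survives at each step.

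The main obstacle is the extra forbidden value at the top size $20$. The counting in the recursion is presumably tight at $20$, which is why Theorem~\ref{thm:main} stops there. I would handle this by using the choice of $a$: there are $22$ choices, each giving a different forbidden value $-a$. I would argue by contradiction that if every choice fails, then $A$ has strong additive structure. For example, many elements $x$ with $-x$ also forced into prefix sums would give arithmetic-progression-like behaviour or many inverse pairs. Such structured sets would then be sequenced directly, using explicit orderings for sets contained in a coset of a small subgroup or close to a progression.

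If the counting still does not close, the fallback is a computer-assisted case analysis in the spirit of the recursive method. This would mean enumerating the possible ``linear dependence patterns'' among the elements of $A$ (which sums can coincide) up to symmetry, and checking each pattern for a valid ordering.
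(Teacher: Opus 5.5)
\textbf{Verdict: there is a genuine gap at $|A|=22$.}

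Your opening reduction is correct. For a zero-sum $A$, a sequencing is exactly an ordering with $0=p_0,p_1,\dots,p_{k-1}$ pairwise distinct, and rotations preserve this. Deleting the last element $a$ therefore reduces the problem to sequencing $A\setminus\{a\}$, whose total $-a$ is automatically nonzero. So Theorem~\ref{thm:main} already yields the statement for $|A|\le 21$.

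The gap is the step to $|A|=22$. There you need a sequencing of a $20$-element set $B$ whose partial sums also avoid a prescribed value $-a$. (You must also take $b\neq -a$; otherwise $\sum B=0$ and the last partial sum of $B$ is $0$.) This avoidance lemma has the following problems:
\begin{itemize}
\item It does not follow from Theorem~\ref{thm:main} or from Corollary~\ref{col:recursive_lemma}, and its exceptional set is left undetermined.
\item The proof you sketch for it (``the same recursion, counting forbidden configurations'') does not correspond to anything available. The recursion behind Theorem~\ref{thm:main} is not a counting argument that happens to be tight at $20$. It is an exhaustive computer verification of a lifting statement (Proposition~\ref{prop2}), and $20$ is simply where that computation stops.
\item Adding a forbidden value changes the statement, so it would need a new verification at precisely the size where the existing one is at its limit.
\item The structural contradiction argument over the $22$ choices of $a$ is not specified.
\item The fallback of enumerating dependence patterns is, as stated, the pure tree search, which the paper remarks only reaches $|A|\le 10$.
\end{itemize}

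The idea you are missing is to stay inside the class of zero-sum sets instead of peeling elements off.
\begin{enumerate}
\item By Corollary~\ref{col:recursive_lemma}, either $A$ is sequenceable or there are distinct $a_1,a_2\in A$ with $a_1+a_2\notin A\cup\{0\}$.
\item In the second case, $A'=(A\setminus\{a_1,a_2\})\cup\{a_1+a_2\}$ is again a zero-sum set of $k-1$ distinct nonzero elements, hence sequenceable by induction. For $k=22$ your own reduction also gives this.
\item Using the cyclic invariance you observed, rotate the sequencing of $A'$ so that the merged element $a_1+a_2$ is $x_1$.
\item Proposition~\ref{prop1} then shows that $A$ is sequenceable for $k\le 22$. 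It is a group-independent rank-certificate search in which the non-zero-sum blocks of the sequencing of $A'$ serve as extra constraints.
\end{enumerate}
The zero-sum hypothesis earns the extra sizes in two ways, not through an avoidance lemma. First, the rotation leaves a single position for the merged element instead of $k-1$. Second, the search can be restricted to blocks of length at most $\lfloor k/2\rfloor$.
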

Finally, in case of the CMPP conjecture, we can go one size further.
\begin{theorem}\label{sumzeroxmx}
Let $G$ be an abelian group and let $A \subseteq G\setminus\{0\}$ be such that $\sum_{x\in A}x=0$ and $\{x,-x\}\not\subset A$ for any $x\in G$.
Then $A$ is sequenceable 
provided that
\[
|A| \leq 23.
\]
\end{theorem}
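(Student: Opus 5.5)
We prove Theorem~\ref{sumzeroxmx} by induction on $n=|A|$, using the same recursive strategy as for Theorems~\ref{thm:main} and~\ref{sumzero}. The base range $n\le 10$ is covered by the known results for the CMPP conjecture in general abelian groups. The zero-sum hypothesis helps because the final partial sum is $p_n=0$. So a sequencing of $A$ is the same as an ordering whose partial sums $p_1,\dots,p_{n-1}$ are distinct and nonzero. Equivalently, we need a closed walk $0=p_0,p_1,\dots,p_n=0$ in which every step uses a distinct element of $A$ and only the endpoint repeats. This cyclic symmetry means we may rotate which element comes last, and we may also reverse the sequence, since the reversed partial sums are $-p_i$.

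The inductive step is as follows. Choose a short tail $T\subseteq A$ with $|T|=t$ small (say $t\in\{1,2,3\}$), and set $A'=A\setminus T$ with target $s=\sum_{x\in T}x=-\sum_{x\in A'}x$. The aim is to sequence $A'$ using the induction hypothesis (or Theorem~\ref{thm:main}/Theorem~\ref{sumzero} when $A'$ is not zero-sum), and then append $T$ in some order. Appending is legal provided the new partial sums $-s+\tau_1,\ -s+\tau_1+\tau_2,\dots$ avoid $0$ and all earlier partial sums of $A'$.

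Each appended sum collides with at most $n-t$ earlier partial sums, so the proof is a counting argument. We have freedom in three places: the choice of $T$ (about $\binom{n}{t}$ options), the order of $T$, and the choice among several sequencings of $A'$ obtained by reversal or cyclic shifts. I will show that not every such option can be blocked. A natural tool is the ``swap'' operation: if an appended sum $q$ equals some $p_j$, then exchanging adjacent elements $a_j,a_{j+1}$ changes only $p_j$, to $p_{j-1}+a_{j+1}$. The swap fails only when the new value is itself a repeat, and this gives explicit linear relations among elements of $A$.

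The condition $\{x,-x\}\not\subset A$ is what lets us go one size further than Theorem~\ref{sumzero}. Many of the obstructing relations have the form $a_i+a_j=0$, or $p_j=p_{j-2}$, which forces $a_{j-1}=-a_j$. These are forbidden outright, so every blocked configuration has to produce one more independent linear relation. I would organise the obstructions into a finite catalogue of relation patterns, of the type $a_i+a_j=a_k$, $2a_i=a_j+a_k$, and so on. Then I would show, for $n\le 23$, that the number of options exceeds the number of possible obstructions. For the relation systems that survive the count, I would check that they force either a repeated element, a zero element, or an inverse pair.

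The main obstacle is the last step. The counting margin is tight near $n=23$, and the residual relation systems may require a (computer-assisted) enumeration over small subgroups. Groups with many involutions are the worst case, since there the conditions $x=-x$ and $2x=0$ interfere with the relation counting. I would first try to exclude elements of order $2$ separately: $A$ contains at most one per ``line'' of such elements, and I would reduce to the case where the $2$-torsion part of $A$ is small, before running the general count.
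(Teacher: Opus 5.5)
There is a genuine gap. The step that carries all the content is only announced (``I will show that not every such option can be blocked'', ``I would organise the obstructions\dots''), and the route you set up cannot reach it as described.

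First, note that for $t=1$ appending is automatic. If $x_1,\dots,x_{n-1}$ sequences $A\setminus\{a\}$, its last partial sum is $-a\neq 0$, so appending $a$ gives a sequencing of $A$. Hence everything up to $|A|=22$ already follows from Theorems~\ref{thm:main} and~\ref{sumzero}, and the only real case is $|A|=23$.

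In that case your induction hypothesis is never usable. The set $A\setminus T$ is zero-sum only if $\sum_{x\in T}x=0$, which is impossible for $t=1$, and for $t=2$ by the no-inverse-pair hypothesis. Theorem~\ref{thm:main} only reaches size $20$, so you are forced to take $t=3$ and must control two new intermediate partial sums.

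The proposed counting then has nothing to stand on, for three reasons:
\begin{itemize}
\item The sequencing of $A\setminus T$ is an arbitrary one coming from an existence statement, and it changes with $T$. So the $\binom{n}{t}$ choices of $T$ cannot be weighed against a common list of obstructions.
\item For a fixed sequencing, each new partial sum may coincide with $0$ or with any earlier partial sum, and nothing you say bounds this.
\item Cyclic shifts are not available. Rotating a sequencing of a set with nonzero sum need not give a sequencing; only reversal does.
\end{itemize}

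The missing idea is to \emph{merge} two elements instead of deleting them. By the Recursive Lemma (Corollary~\ref{col:recursive_lemma}, resting on Theorem~\ref{prop:another_3}), either $A\cup\{0\}$ is a subgroup and $A$ is sequenceable, or there are $a_1,a_2\in A$ with $a_1+a_2\notin A\cup\{0\}$. In the second case $A'=(A\setminus\{a_1,a_2\})\cup\{a_1+a_2\}$ is still a zero-sum set of nonzero elements, of size $22$. So Theorem~\ref{sumzero} sequences it. No inverse-pair condition on $A'$ is needed, which matters because $a_1+a_2$ may be the negative of an element of $A$. Since $A'$ is zero-sum, cyclic rotation is legitimate, and you may take $x_1=a_1+a_2$. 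Splitting $x_1$ back into $a_1,a_2$ gives an ordering of $A$ in which only the single partial sum $a_1$ can collide.

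The paper finishes with a computer-verified search (Proposition~\ref{prop1}). It explores orderings by adjacent swaps and records each assumed zero-sum block as a $0/1$ relation among the elements. A branch is closed when the rational row span of the recorded relations forces $a_i=0$, $a_i=a_j$, or the vanishing of a block that the sequencing of $A'$ shows is nonzero. The hypothesis $\{x,-x\}\not\subset A$ is what lets this search close at $k=23$.

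Your swap-and-relation catalogue is in the spirit of that search. But without the merge reduction, and without actually carrying out the exhaustion, the proposal does not prove the statement.
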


The results obtained here significantly improve the previously known bounds for general abelian groups. 
Indeed, while the conjecture was previously verified only for sets of size at most $9$ in arbitrary abelian groups, 
our recursive approach allows us to extend this bound up to $20$, and even further in the presence of additional 
structural assumptions.

The key ingredient of our method is a Recursive Lemma, presented in Section $2$ as Corollary~\ref{col:recursive_lemma}, allowing the reduction of the problem to smaller sets via 
a suitable merging operation. In this section, we will first present a polynomial proof of this lemma for the groups $\mathbb{Z}_p$ and then a more general direct proof. The general proof relies on the characterization of the subsets of $G$ closed under the sum of distinct elements, which is also given in Section~\ref{sec:rec_lemma}. 
Finally, in the last Section, we will discuss and describe the computational approach used to get our main theorems.
\section{The Recursive Lemma} \label{sec:rec_lemma}
In this section, we 
prove a Recursive Lemma that 
is crucial in our recursive approach for the case of a sum-zero subset $A\subset\mathbb{Z}_k$. Namely, we 
prove that we can choose $x_1,x_2\in A$ so that $x_1+x_2$ is nonzero and does not belong to $A\setminus\{x_1,x_2\}$. This allows us to work recursively with $A':=(A\setminus\{x_1,x_2\})\cup \{x_1+x_2\}.$ 
\subsection{A Combinatorial Nullstellensatz's approach in $\mathbb{Z}_p$}

We present a polynomial-based proof of the Recursive Lemma. Although we will later prove this lemma in a more general context, we believe this particular approach is elegant and mathematically significant.

The proof relies on the following version of Alon’s Combinatorial Nullstellensatz:
\begin{theorem}\cite[Theorem 1.2]{Alon}\label{thm:alon}
Let $\mathbb{F}$ be a field and let $f=f(x_1,\ldots,x_k)$ be a polynomial in $\mathbb{F}[x_1,\ldots,x_k]$. Suppose the degree of $f$ is 
$\sum\limits_{i=1}^k t_i$, 
where each $t_i$ is a nonnegative integer, and suppose the coefficient of
$\prod\limits_{i=1}^k x_i^{t_i}$ in $f$ is nonzero.
Then, if $A_1,\ldots,A_k$ are subsets of $\mathbb{F}$ with $|A_i|> t_i$, there are $a_1 \in A_1,\ldots,a_k \in A_k$ so that 
$f(a_1,\ldots,a_k)\neq 0$.
\end{theorem}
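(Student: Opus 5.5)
We would prove the statement by contradiction, reducing $f$ modulo the one-variable polynomials that vanish on the sets $A_i$. First we shrink each $A_i$ to a subset of size exactly $t_i+1$; this is harmless, since a nonvanishing point in the smaller grid is also one in the original grid. Now suppose, for contradiction, that $f(a_1,\ldots,a_k)=0$ for all $(a_1,\ldots,a_k)\in A_1\times\cdots\times A_k$. For each $i$ we set
\[
g_i(x_i)=\prod_{a\in A_i}(x_i-a)=x_i^{t_i+1}-\sum_{j=0}^{t_i} b_{ij}x_i^{j}.
\]
Since each $g_i$ vanishes on $A_i$, we have $x_i^{t_i+1}=\sum_{j\le t_i}b_{ij}x_i^j$ for every $x_i\in A_i$.

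\textbf{Reduction.} While some monomial of $f$ contains $x_i^{e}$ with $e\ge t_i+1$, we replace one factor $x_i^{t_i+1}$ by $\sum_{j\le t_i}b_{ij}x_i^j$. Each step preserves the values of the polynomial on the grid $A_1\times\cdots\times A_k$. The process terminates, for instance by induction on the multiset of exponent vectors. The result is a polynomial $\bar f$ with $\deg_{x_i}\bar f\le t_i$ for every $i$, and $\bar f$ still vanishes on the whole grid.

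\textbf{Key observation.} The coefficient of $\prod_i x_i^{t_i}$ is the same in $\bar f$ as in $f$, hence nonzero.
\begin{itemize}
\item The monomial $\prod_i x_i^{t_i}$ is never itself reduced, since none of its exponents exceeds $t_i$.
\item Any reduction step acts on a monomial of total degree at most $\deg f=\sum_i t_i$. It replaces that monomial by monomials of strictly smaller total degree, hence of degree less than $\sum_i t_i$.
\item By induction on the steps, every monomial created during the process has degree less than $\sum_i t_i$. So no step ever contributes to the coefficient of the degree-$\sum_i t_i$ monomial $\prod_i x_i^{t_i}$.
\end{itemize}
This degree-bookkeeping is the one point needing care, and it is exactly where the hypothesis $\deg f=\sum_i t_i$ is used.

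\textbf{Conclusion.} It remains to show that a polynomial $h$ with $\deg_{x_i}h\le t_i$ for all $i$ that vanishes on $A_1\times\cdots\times A_k$, with $|A_i|=t_i+1$, is identically zero. We argue by induction on $k$. For $k=1$, a nonzero univariate polynomial of degree at most $t_1$ has at most $t_1$ roots in the field $\mathbb{F}$, so it cannot vanish on $t_1+1$ points. For the inductive step, write
\[
h=\sum_{j\le t_k} h_j(x_1,\ldots,x_{k-1})\,x_k^j.
\]
Fix any point $(a_1,\ldots,a_{k-1})\in A_1\times\cdots\times A_{k-1}$. The univariate polynomial in $x_k$ obtained by substituting this point has degree at most $t_k$ and vanishes on the $t_k+1$ points of $A_k$, so all its coefficients are zero. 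Thus $h_j(a_1,\ldots,a_{k-1})=0$ for all $j$ and all such points. By induction each $h_j$ is identically zero, and hence $h\equiv 0$.

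Applying this to $\bar f$ gives $\bar f\equiv 0$. This contradicts the fact that the coefficient of $\prod_i x_i^{t_i}$ in $\bar f$ is nonzero, so some point of the grid satisfies $f(a_1,\ldots,a_k)\ne 0$.
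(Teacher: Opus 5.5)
Your proof is correct. It is essentially Alon's original argument; the present paper only cites this theorem and does not reprove it. Alon first proves, by the same reduction of $f$ modulo $g_i(x_i)=\prod_{a\in A_i}(x_i-a)$ and the same grid-vanishing lemma, that $f=\sum_i h_i g_i$ with $\deg h_i\le \deg f-\deg g_i$, and then notes that every top-degree monomial of $h_i g_i$ is divisible by $x_i^{t_i+1}$. You have inlined that intermediate step and tracked the coefficient of $\prod_i x_i^{t_i}$ directly through the reduction steps, which is the same degree bookkeeping.
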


In order to apply this theorem for proving the existence of the elements $x_1,x_2$ that can be merged, we construct a suitable homogeneous polynomial $F_k$ of degree $\frac{(k+2)(k-1)}{2}$, identifying a monomial with 
nonzero coefficient such that the degree of each of its terms $x_i$ is less than $|A|=k$. Here we will assume that $p>25$ and $23\geq k\geq 10$ as the case $p\leq 25$ has already been solved in \cite{CMPP18}, the case $k\leq 9$ has been solved in \cite{AL20}, and our recursive approach can reach $k$ up to $23$. 
We define: 
$$F_k(x_1,\dots,x_k)=(x_1+x_2)\prod_{i=3}^k(x_1+x_2-x_i)\prod_{i,j\in [1,k], i>j} (x_i-x_j).$$

The existence of values $x_1,\ldots,x_k\in A$ such that
$F_k(x_1,\ldots,x_k)\neq 0$, given by Theorem \ref{thm:alon}, implies that
$x_1+x_2$ is nonzero and it does not belong to $A\setminus \{x_1,x_2\}$.

We can use Alon's combinatorial Nullstellensatz  to prove this statement. Indeed, assuming $p>25$ we find, with Mathematica, for any $k\in \{10,\dots,23\}$ a monomial of the form $\prod_{i=1}^k x_i^{t_i}$ in the development of $F_k$ whose coefficient is nonzero and such that $k>t_i$ for all $i\in \{1,\dots,k\}$ (see Table \ref{tab:tab1} for the explicit coefficients). We obtain then the Recursive Lemma in the $\mathbb{Z}_p$ case.

\begin{lemma}
Let $A\subseteq \mathbb{Z}_p\setminus \{0\}$ be of cardinality $10\leq k\leq 23$. Then there exist two distinct elements $x_1,x_2\in A$ so that $A':=(A\setminus\{x_1,x_2\})\cup \{x_1+x_2\}$ is a subset of $\mathbb{Z}_p\setminus \{0\}$ of size $k-1$.
\end{lemma}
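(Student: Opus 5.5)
The plan is to apply Theorem~\ref{thm:alon} to the polynomial $F_k$ over $\mathbb{F}=\mathbb{Z}_p$, taking $A_1=\dots=A_k=A$. Here $p>25$ is assumed: the small primes are covered by \cite{CMPP18}, and $k\le 9$ by \cite{AL20}.

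First, the degree. Let $k=|A|$ with $10\le k\le 23$. The factor $(x_1+x_2)$ has degree $1$, and $\prod_{i=3}^k(x_1+x_2-x_i)$ has degree $k-2$. The Vandermonde part $\prod_{i>j}(x_i-x_j)$ has degree $\binom{k}{2}$. Hence $F_k$ is homogeneous of total degree
\[
1+(k-2)+\binom{k}{2}=\frac{(k+2)(k-1)}{2}.
\]
To apply Theorem~\ref{thm:alon} we need nonnegative integers $t_1,\dots,t_k$ with $\sum t_i=\frac{(k+2)(k-1)}{2}$, each $t_i\le k-1<|A|$, such that the coefficient of $\prod_i x_i^{t_i}$ in $F_k$ is nonzero modulo $p$. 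Note that $\sum t_i$ exceeds $\binom{k}{2}$ by exactly $k-1$. So a natural candidate is a permutation of the Vandermonde exponent vector $(0,1,\dots,k-1)$ in which the extra $k-1$ units are distributed so that no entry exceeds $k-1$.

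The main obstacle is the step of finding such a monomial and certifying that its coefficient is nonzero modulo every prime $p>25$. I would proceed in three stages:
\begin{itemize}
\item For each $k\in\{10,\dots,23\}$, search symbolically with Mathematica over admissible exponent vectors.
\item Compute the integer coefficient $c_k$ of each candidate; these are the values recorded in Table~\ref{tab:tab1}.
\item Factor $c_k$ and check that all of its prime divisors are at most $25$, so that $c_k\not\equiv 0 \pmod p$ for every prime $p>25$.
\end{itemize}
To keep the computation feasible, I would extract coefficients of $x_1,x_2$ last. The remaining variables appear in only a few linear factors, so one can expand the Vandermonde by permutations (as a signed sum over $S_k$) and multiply by the linear factors. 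If a first candidate has a coefficient with a large prime factor, I would try other distributions of the extra exponents, for instance concentrating them on $x_1$ and $x_2$.

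Once such $t_i$ are found, Theorem~\ref{thm:alon} gives $a_1,\dots,a_k\in A$ with $F_k(a_1,\dots,a_k)\ne0$. The consequences are as follows:
\begin{itemize}
\item The Vandermonde factor is nonzero, so the $a_i$ are pairwise distinct. Since $|A|=k$, they enumerate all of $A$.
\item The factor $a_1+a_2$ is nonzero.
\item Each factor $a_1+a_2-a_i$ with $i\ge3$ is nonzero, so $a_1+a_2\notin A\setminus\{a_1,a_2\}$.
\end{itemize}
Setting $x_1=a_1$ and $x_2=a_2$, the set $A'=(A\setminus\{x_1,x_2\})\cup\{x_1+x_2\}$ therefore consists of $k-2$ elements of $A$ together with the new nonzero element $x_1+x_2$. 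Hence $A'$ is a subset of $\mathbb{Z}_p\setminus\{0\}$ of size $k-1$.
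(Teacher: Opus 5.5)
Your proposal is correct and is essentially the paper's argument. Like the paper, you apply Theorem~\ref{thm:alon} to $F_k$ with $A_1=\dots=A_k=A$, using a monomial whose exponents are all at most $k-1$ and whose integer coefficient (Table~\ref{tab:tab1}) has only prime factors $\le 23$; the paper's choice is $x_1^{k-1}x_2x_3^2\cdots x_k^{k-1}$, i.e.\ all $k-1$ extra units go on $x_1$.

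One caution: $p>25$ is a genuine hypothesis of this lemma, not something \cite{CMPP18} supplies. For $p\in\{11,13,17,19,23\}$ the set $A=\mathbb{Z}_p\setminus\{0\}$ (with $k=p-1$) admits no valid merge, and \cite{CMPP18} only settles sequenceability for those primes.
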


\begin{table}[h]
\centering
\small
\begin{tabular}{c c}
\hline
$k$ & Coefficient in $\mathbb{Z}$ \\
\hline
10  & $2^2 \cdot 11$ \\
11  & $2 \cdot 3^3$ \\
12  & $-5 \cdot 13$ \\
13  & $-7 \cdot 11$ \\
14  & $2 \cdot 3^2 \cdot 5$ \\
15  & $2^3 \cdot 13$ \\
16  & $-7 \cdot 17$ \\
17  & $-3^3 \cdot 5$ \\
18  & $2^3 \cdot 19$ \\
19  & $2 \cdot 5 \cdot 17$ \\
20  & $-3^3 \cdot 7$ \\
21  & $-11 \cdot 19$ \\
22  & $2 \cdot 5 \cdot 23$ \\
23  & $2^2 \cdot 3^2 \cdot 7$ \\
\hline
\end{tabular}
\caption{Coefficient of the monomial $x_1^{k-1} x_2 x_3^{2} \cdots x_k^{k-1}$ in $F_k$ for $9 \le k \le 23$.}
\label{tab:tab1}
\end{table}

\subsection{A direct proof}
In this subsection we 
prove the Recursive Lemma in a much more general setting. More precisely we 
characterize the sets $A$ for which $a_i+a_j$ belongs to $A\cup \{0\}$ whenever $i$ and $j$ are distinct indices. Proposition~\ref{prop:another_2} shows that, for most cases, one of the elements can be chosen freely.

The characterization, Theorem~\ref{prop:another_3}, and the next proposition, Proposition~\ref{prop:another_2}, follows from the same ideas leading to Kneser's theorem \cite{Kneser_1,Kneser_2,david}, which claims that if $G$ is an abelian group, then $|A+B|\geq |A|+|B|-1$ unless $A+B$ is a union of cosets of a subgroup.

\begin{proposition}[Translation by $a_i$]\label{prop:another_2}
	Let $A=\{a_1,\ldots,a_k\}\subset \mathbb{Z}_m\setminus \{0\}$ be a set of pairwise different elements. Then, for each $i\in [k]$, one of the following holds.
	\begin{enumerate}
		\item\label{enum:1} There exists a $j\neq i$ such that $a_i+a_j\notin \{0,a_1,\ldots,a_k\}$.
        \item \label{enum:4} There exists an $s<k$ such that $A\cup \{0\}=\{a_i,0,-a_i,-2a_i,\ldots,-sa_i\}\cup M$, where $M$ is a union of cosets of the subgroup generated by $a_i$. If $M$ is the empty set, $s=k-1$.
		\end{enumerate}
\end{proposition}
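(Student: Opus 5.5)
Fix $i$ and write $a=a_i$, $B=A\cup\{0\}$, and $H=\langle a\rangle$. Suppose \eqref{enum:1} fails, so that $a+a_j\in B$ for every $j\neq i$. The plan is to study the map $x\mapsto x+a$ on $B$, which is injective, and to read off the structure of $B$ from how this map permutes the elements within each $H$-coset.

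\textbf{Step 1: the cosets of $H$ other than $H$.} Take a coset $C\neq H$ and let $B_C=B\cap C$. Every element of $B_C$ is some $a_j$ with $j\neq i$, because $0$ and $a$ both lie in $H$. Hence $B_C+a\subseteq B\cap C=B_C$. The map $x\mapsto x+a$ is injective, so $B_C+a=B_C$. Therefore $B_C$ is invariant under translation by $a$, which means it is either empty or all of $C$. Taking $M$ to be the union of the cosets $C\neq H$ that meet $B$ gives the required union of cosets.

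\textbf{Step 2: the coset $H$ itself.} Let $B_H=B\cap H$. It contains $0$ and $a$. Every element $x\in B_H\setminus\{a\}$ satisfies $x+a\in B_H$: for $x=0$ this holds because $a\in B$, and for $x=a_j$ with $j\neq i$ it holds by hypothesis.

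First suppose $B_H=H$. Then $H$ itself is a coset contained in $B$. But $0\in B$ is not in $A$, and the statement asks us to express $A\cup\{0\}$, so this case needs care. I would absorb it into \eqref{enum:4} by taking $s=|H|-2$ and letting the chain $a,0,-a,\dots,-sa$ exhaust $H$; here $s$ is read modulo the order of $a$.

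Otherwise $B_H\neq H$. Then some $-ta\notin B_H$, and we let $t\ge 1$ be the least such index. Now walk backwards from $0$. We know $0\in B_H$. Suppose $-ua\in B_H$ for $u=0,\dots,t-1$, and suppose some $y\in B_H$ lies outside the chain $\{a,0,-a,\dots,-(t-1)a\}$. Write $y=-ra$ with $r\ge t$. Apply the forward closure repeatedly, starting from $y$: each step stays in $B_H$ as long as we have not reached $a$. Going forward from $-ra$ we must pass through $-ta$ before reaching $a$, which contradicts $-ta\notin B_H$. Hence $B_H=\{a,0,-a,\dots,-sa\}$ with $s=t-1$.

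\textbf{Step 3: bounds on $s$.} The bound $s<k$ follows from counting, since $|B|=k+1$. If $M=\emptyset$, then $B=B_H$ has $s+2=k+1$ elements, so $s=k-1$.

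The main obstacle is the bookkeeping in Step 2. The forward orbit argument has to be phrased cyclically, with indices modulo the order of $a$, and the degenerate case $B_H=H$ must be made consistent with the formula in \eqref{enum:4}.
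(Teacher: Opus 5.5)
Your proof is correct and rests on the same mechanism as the paper's: when (1) fails, translation by $a_i$ is an injective map sending $(A\cup\{0\})\setminus\{a_i\}$ into $A\cup\{0\}$, so the structure splits into full cosets of $\langle a_i\rangle$ plus the progression $-sa_i,\ldots,-a_i,0,a_i$. The difference is only bookkeeping. You sort $A\cup\{0\}$ by cosets up front, using pigeonhole on each nontrivial coset and forward closure inside $\langle a_i\rangle$, whereas the paper follows maximal chains and peels off cosets recursively; your organization handles the edge cases (the unconstrained translate $2a_i$, and $\langle a_i\rangle\setminus\{0\}\subseteq A$) more cleanly than the paper's chain argument.
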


\begin{proof}[Proof of Proposition~\ref{prop:another_2}]
	Let $g_i:\mathbb{Z}_m\to \mathbb{Z}_m$ be the mapping $g_i(x)=x+a_i$. Then $|g_i(\{a_1,\allowbreak \ldots,a_k\})|=k$. Further, since $0\notin \{a_1,\ldots,a_k\}$, then $a_i\notin g_i(\{a_1,\ldots,a_k\})$. Let $b_0,\ldots,b_s$ be the longest chain of distinct elements of $\{a_1,\ldots,a_k\}$ so that $g_i(b_{j})=b_{j+1}$ for each $j\in\{0,\ldots,s-1\}$.

    If $g_i(b_s)\in \{a_1,\ldots,a_k\}$, then $g_i(b_s)=b_0$ as the map $g_i$ is injective, and we obtain $g_i(b_s)=b_s+a_i=b_0$ and since $b_s=g_i^s(b_0)=sa_i+b_0$, then $(s+1)a_i+b_0=b_0$ which implies that $(s+1)a_i=0$, and since $s+1$ is the minimal with such property, then $s+1$ is the order of $a_i$. Further, if we let $S=\langle a_i\rangle$ be the subgroup generated by $a_i$, then $\{b_0,\ldots,b_s\}+S=\{b_0,\ldots,b_s\}$  and it is a coset of $S$. Note that $\{b_0,\ldots,b_s\}\subset \{a_1,\ldots,a_k\}$ and thus it is not the $0$ coset (or the subgroup $S$), as it does not contain the identity element $0$.
We consider the new set $\{a_1,\ldots,a_k\}\setminus \{b_0,\ldots,b_s\}$ and apply the result inductively.

Observe that this procedure of iteratively applying $g_i$ cannot reach $a_i$, since the mapping is bijective, and that would mean that we apply $g_i$ to $0$, but $0$ is not in $A$, and thus we never consider it as an element in the domain of the mapping. In particular, $s\leq k-2$.
If $g_i(b_s)=0$, and $s=k-2$. Then we conclude that $b_0+a_i=b_1$, $b_1+a_i=b_2$ and in general $b_j=b_0+ja_i \to b_0=b_j-ja_i$ with $b_s=b_0+(k-2)a_i=0 \to b_s-(k-2)a_i=b_0$, and $b_s+a_i-(k-1)a_i=b_0=-(k-1)a_i$. Then $A=\{a_1,\ldots,a_k\}=\{b_0,\ldots,b_s,a_i\}$ as $s=k-2$, and $\{b_0,\ldots,b_s,a_i\}=\{-(k-1)a_i,\ldots,-a_i,a_i\}$ and so the second part of the statement follows with $M$ as the empty set.

If $g_i(b_s)=0$, and $s<k-2$, this means that $A\setminus \{a_i,b_0,\ldots,b_s\}$ is non-empty. Observe also that $g_i(A\setminus \{b_0,\ldots,b_s\})\cap \{b_0,\ldots,b_s\}=\emptyset$ since the mapping $g_i$ is bijective and all the elements in $\{b_1,\ldots,b_s\}$ have a preimage already, and $g_i(x)\neq b_0$ for each $x\in A\setminus \{a_i,b_0,\ldots,b_s\}$ by the assumed maximality on the $s$ of $\{b_0,\ldots,b_s\}$. By applying the same procedure of finding a (new) maximal chain $\{c_0,\ldots,c_r\}$ of the elements of $A\setminus \{a_i,b_0,\ldots,b_s\}$;
we conclude that we are either in the first case ($g_i(c_r)=c_0$) and find another coset of the subgroup, and we would again apply the result recursively, or $c_r+a_i$ is a new element different from $0$ and the first part of the statement follows. (Note that $g_i(b_s)=0$ already, and thus the other two cases do not apply).
\end{proof}

The Recursive Lemma 
is then a consequence of the following result.

\begin{theorem}[Characterization Theorem]\label{prop:another_3}
	Let $G$ be an abelian group and let $A=\{a_1,\ldots,a_k\}$ $\subseteq G\setminus \{0\}$ be a set of $k\geq 2$ elements such that, for any distinct $i,j$, $a_i+a_j\in \{0,a_1,\ldots,a_k\}$. Then $A\cup \{0\}$  is a subgroup of $G$.
\end{theorem}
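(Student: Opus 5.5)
The plan is to show that $B:=A\cup\{0\}$ is closed under addition. A finite nonempty subset of a group that is closed under the operation is a subgroup: for $b\in B$ the powers $b,2b,3b,\dots$ all lie in $B$, so $nb=0$ for some $n\ge 1$ and $-b=(n-1)b\in B$. Sums involving $0$ and sums of two distinct elements of $A$ are handled by hypothesis, so it remains to show $2a\in B$ for every $a\in A$. It is convenient to obtain inverses first, $-a\in B$, and then use them to produce $2a$.

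\textbf{Step 1: structure under translation by $a$.} Fix $a=a_i\in A$. We may replace $G$ by the finitely generated subgroup $\langle A\rangle$, and I expect the proof of Proposition~\ref{prop:another_2} to go through verbatim there. That proof only uses that $g_i(x)=x+a_i$ is injective, that $0\notin A$, and that the chains used are finite; it never uses the cyclic structure of $\mathbb{Z}_m$. Since alternative~(\ref{enum:1}) fails by hypothesis, we get
\[
B=\{a,0,-a,\dots,-sa\}\cup M,
\]
where $M$ is a union of cosets of $\langle a\rangle$ disjoint from the chain. If $s\ge 1$ then $-a\in A$.

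If $s=0$, then $k\ge 2$ forces $M\neq\emptyset$. Pick $m\in M$. Then $m-a\in M\subseteq A$, and $m-a\ne a$ because otherwise $m=2a$ and $2a\in B$ already. Now $a+(m-a)=m$, which is consistent but gives no new information, so the inverse has to come from elsewhere. I would apply the same structure statement to $m$ in place of $a$ and play the two coset decompositions (by $\langle a\rangle$ and by $\langle m\rangle$) against each other. The sum $m+(m-a)$ lies in $B$, and so on, and in the cyclic-coset situation this should force $\langle a\rangle\subseteq B$. This is a somewhat messy case, which I would organise by looking at the smallest coset.

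\textbf{Step 2: $2a\in B$.} Assume $-a\in B$ for all $a$, and suppose $2a\notin B$.
\begin{itemize}
\item If $k=2$, then $A=\{a,b\}$ and $a+b\in\{0,a,b\}$ forces $b=-a$. Symmetrically $2b\in B$ fails only in the same way, and Step 1 applied to $a$ gives that $\{a,0,-a\}$ is the whole cyclic group $\langle a\rangle$ of order $3$, so $2a=-a\in B$.
\item If $k\ge 3$, choose $b\in A\setminus\{a,-a\}$. Then $c:=a+b$ lies in $A$, since $c\neq 0$, and $c\neq a$. Hence $c+a=2a+b\in B$, and $-b\in B$. If $2a+b\neq -b$ and $2a+b\ne 0$, adding the distinct elements $2a+b$ and $-b$ gives $2a\in B$.
\end{itemize}
The degenerate coincidences ($2a+b\in\{0,-b\}$, or $-b=0$, which is impossible) then have to be killed separately. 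Each forces a relation such as $2a=-b$ or $2(a+b)=0$, which either puts $2a$ in $B$ directly or allows a different choice of $b$.

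\textbf{Expected obstacle.} I expect the main difficulty to be the case $s=0$ of Step~1, where the chain through $a$ is trivial and all other elements sit in full $\langle a\rangle$-cosets. Here no pair sum obviously produces $-a$ or $2a$. My first attempt would be to use the decomposition for a second element $m\in M$ and exploit the fact that $M$ is simultaneously a union of $\langle a\rangle$-cosets and, away from a short chain, of $\langle m\rangle$-cosets. From this I would try to deduce that $a$ has small order relative to $|M|$ and that $\langle a\rangle\subseteq B$.
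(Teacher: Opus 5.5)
\textbf{There is a genuine gap, and it is exactly the case you flagged.} In Step~1 you never exclude $s=0$, i.e.\ a non-involution $a$ with $-a\notin A$. ``Playing the two coset decompositions against each other'' and ``this should force $\langle a\rangle\subseteq B$'' are not arguments. Everything downstream depends on this case, since Step~2 assumes $-b\in B$ for all $b$.

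\textbf{The missing idea.} You do not have to rule out $s=0$ element by element. It is enough to find \emph{one} non-involution $a_i$ with $-a_i\in A$, and the paper gets this by a global count:
\begin{itemize}
\item Suppose no such element exists. Then no two distinct elements of $A$ sum to $0$, so all $\binom{k}{2}$ pair sums lie in $A$.
\item The pairs with a given sum $a_l$ form a matching of $A\setminus\{a_l\}$. So $k$ must be odd and every one of these matchings is perfect.
\item Take a non-involution $a_t$. Chasing $a_t+a_{t'}=a_l$ together with $a_l+a_{l'}=a_{t'}$ gives $a_t+a_{l'}=0$. Hence $-a_t\in A$, a contradiction.
\end{itemize}

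\textbf{How this finishes the proof.} Once $\pm a_i$ are available, take any $a_j\ne\pm a_i$. The elements $a_j+a_i$ and $a_j-a_i$ are distinct elements of $A$, so $2a_j=(a_j+a_i)+(a_j-a_i)\in B$. Translation by $a_j$ then maps $B$ into, hence onto, itself, which gives $-a_j\in A$. In your language, $2a_j\in B$ forces the ``full coset'' alternative of Step~1, so $s=0$ never has to be analysed. For $2a_i$, pick $a_1\ne\pm a_i$ and write $a_i=a_1+x$ with $x\in B$. The subcase $x=-a_i$ gives $2a_i=a_1$ directly. Otherwise regroup $2a_i=x+(a_1+a_i)$, which lies in $B$ because translation by $x$ preserves $B$. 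Along this route your Step~2, with its only-sketched degenerate cases, is not needed.

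\textbf{Your $k=2$ bullet is wrong.} When $2a\notin B$, the translation structure of Step~1 is a path $-sa\to\cdots\to 0\to a$, not a cycle. With $M=\emptyset$, nothing forces $3a=0$. In fact the statement is false for $k=2$: $A=\{1,4\}\subset\mathbb{Z}_5$ satisfies the hypothesis, since $1+4=0$, yet $\{0,1,4\}$ is not a subgroup. The paper's proof also dismisses $|A|=2$ with an incorrect one-line claim. The theorem holds for $k\ge 3$, which is also where the existence of $a_1\ne\pm a_i$ above is used. That is all the Recursive Lemma needs, since a set $\{a,-a\}$ is trivially sequenceable via the ordering $a,-a$.
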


\begin{proof}[Proof of Theorem~\ref{prop:another_3}]
First of all, we consider the case where $A$ is constituted only by involutions. In this case, it is clear that $A\cup \{0\}$ is inverse-closed and it is closed under the sum of its elements and is hence a subgroup of $G$. Similarly, also when $|A|=2$ we have that $A\cup \{0\}$ is inverse-closed and it is closed under the sum. So in the following we will assume the existence of a non-involution element and that $|A|\geq 3$.

Now we prove that, under these hypotheses, there exists a non-involution $a_i\in A$ so that $-a_i\in A$.
Indeed, assume otherwise. With the hypothesis for any distinct $i,j$, $a_i+a_j\in \{0,a_1,\ldots,a_k\}$ we conclude that for any distinct $\{i,j\}$, $a_i+a_j\not=0$. There are ${k \choose 2}$ such pairs and only $n$ elements in $A$, so each element of $A$ must be the sum of an average of  $$\frac{{k \choose 2}}{k}=(k-1)/2$$ distinct pairs.
If a sum would have more than this number of pairs, then two such pairs would have an element in common, thus making the two pairs the same. So each element of $A$ must be the sum of exactly $(k-1)/2$ distinct pairs. Note that this is not possible if $k$ is even. For the case $k$ odd, for each $i$ there is a matching $\{(r_i,s_i)\}_{i\in[(k-1)/2]}$ between elements in $\{a_1,\ldots,a_k\}\setminus a_i$ such that $a_{r_j}+a_{s_j}=a_i$. By considering the relation of the non-involution element $a_t$ and its matched element $a_{t'}$ and $a_i$ so that (so $a_t+a_{t'}=a_i$ with $(t,t')=(r_j,s_j)$ for some $j\in [(k-1)/2]$), and the relation $a_{t'}=a_i+a_{i'}$ for the $i'$-th element matching the $i$-th, we conclude that $a_t+a_{i'}=0$. Since $a_t$ was a non-involution element, this proves that there exists a non-involution $a_t\in A$ so that $-a_t\in A$.


Now, assuming that $a_i,-a_i\in A$ with $a_i\neq -a_i$, we prove that $2a_j\in A\cup \{0\}$ for all $a_j\not=\pm a_i$. Indeed, both $a_j+a_i$ and $a_j-a_i\in A\cup \{0\}$ and, since these elements are distinct, also
$$(a_j+a_i)+(a_j-a_i)=2a_j\in A\cup \{0\},$$
and claim follows.
Now, since $2a_j=a_j+a_j\in A\cup \{0\}$, it follows that, for all $a_j\not=\pm a_i$, the sets 
$$\{0+a_j,a_1+a_j,\dots,a_k+a_j\}\subseteq A\cup \{0\}.$$
Since both these sets have cardinality $k+1$, we must have that 
$$\{0+a_j,a_1+a_j,\dots,a_k+a_j\}= A\cup \{0\}$$
and hence $-a_j\in A$ for all $j$ (also when $j=i$).

Finally, we consider $2a_i$ and want to see that $2a_i\in A\cup \{0\}$. We assume $a_1\not=\pm a_i$, and we recall that we have 
$\{0+a_1,a_1+a_1,\dots,a_k+a_1\}= A\cup \{0\}$.
Therefore, there exists $j$ such that $a_i=a_1+a_j$. 
If $a_j= a_i$, then we have $a_1=0$ a contradiction, and if $a_j= -a_i$ then $a_1=2a_i$ as desired. If $a_j\not=\pm a_i$, then 
this means that
$$2a_i=a_i+a_i=(a_j+a_1)+a_i=a_j+(a_1+a_i).$$
Note that there exists an index $h$ such that $a_1+a_i=a_i+a_1=a_h\in A$ which implies that 
$$2a_i=a_j+(a_1+a_i)=a_j+a_h=a_h+a_j \in A\cup \{0\}$$
where the last inclusion holds since $a_j\not=\pm a_i$. We also note that the same argument (exchanging $a_i$ by $-a_i$) applies for $2(-a_i)=-2a_i\in A$.
We have proved that
$2a_j\in A\cup\{0\}$ for all $j$ (also when considering $\pm a_i$). It follows that $A\cup \{0\}$ is both inverse-closed and closed under the sums of its elements and thus is a subgroup.
 \end{proof}
\begin{corollary}[Recursive Lemma]\label{col:recursive_lemma}
Let $G$ be an abelian group, and let $A=\{a_1,\ldots,a_k\}\subseteq G\setminus \{0\}$ be a set of $k$ elements. Then we either see that $A$ is sequenceable or that there exist distinct $\{i,j\}$ such that $a_i+a_j\not \in A\cup \{0\}$.
\end{corollary}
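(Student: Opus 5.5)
Suppose that for every pair of distinct indices $i,j$ we have $a_i+a_j\in A\cup\{0\}$. We will show that $A$ is then sequenceable; this gives the dichotomy of the Recursive Lemma. The case $k=1$ is trivial, since a single nonzero element is sequenceable. So assume $k\geq 2$. By the Characterization Theorem (Theorem~\ref{prop:another_3}), $H:=A\cup\{0\}$ is a finite subgroup of $G$, and $A=H\setminus\{0\}$. The problem therefore reduces to showing that the nonzero elements of a finite abelian group $H$ of order $n=k+1$ can be ordered with all partial sums distinct and nonzero except possibly the last one.

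This is the classical sequenceability question for the full set $H\setminus\{0\}$. The natural tool is Gordon's theorem: a finite abelian group $H$ is sequenceable, in the sense that its elements can be listed as $0=g_0,g_1,\ldots,g_{n-1}$ with partial sums $g_0,g_0+g_1,\ldots$ pairwise distinct, if and only if its Sylow $2$-subgroup is nontrivial cyclic. In that case, dropping $g_0=0$ leaves the ordering $g_1,\ldots,g_{n-1}$ of $A$. Its partial sums are the partial sums of the full sequence with the initial $0$ removed, so they are distinct and nonzero. Hence this ordering is a sequencing, and the last partial sum need not even be zero.

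The remaining cases are those where the Sylow $2$-subgroup of $H$ is trivial or non-cyclic. Here the full sum $\sum_{x\in H}x$ is $0$, so the last partial sum $p_k$ is forced to be $0$; this is allowed by the definition of sequencing. We need an ordering of $H\setminus\{0\}$ whose first $k-1$ partial sums are distinct and nonzero, that is, an $R$-sequencing or harmonious-type ordering. I would invoke the known result that every abelian group with trivial or non-cyclic Sylow $2$-subgroup is $R$-sequenceable, except $\mathbb{Z}_2^2$ (Friedlander--Gordon--Miller for odd order, Headley and others for the non-cyclic $2$-part). Such an ordering $g_1,\ldots,g_{n-1}$ of $H\setminus\{0\}$ has partial sums $g_1+\cdots+g_i$ for $i\le n-2$ that are distinct and nonzero, which is exactly a sequencing of $A$. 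The exceptional group $\mathbb{Z}_2^2$ is handled by hand: $A=\{a,b,a+b\}$ with partial sums $a$, $a+b$, $0$ is a sequencing.

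The main obstacle is the last step. Relying on the literature on $R$-sequenceability, with its small-group exceptions and the fact that the case of a non-cyclic Sylow $2$-subgroup was settled only in stages, is heavy. As a fallback, since the paper only needs $k\leq 23$, the groups $H$ involved have order at most $24$. For these I would simply verify sequenceability of $H\setminus\{0\}$ directly, either by computer alongside the other computations of the last section or by noting that $|A|\le 9$ is already covered by~\cite{AL20}. Every case therefore reduces to a finite check on groups of order at most $24$.
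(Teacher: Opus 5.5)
Your argument is correct and is essentially the paper's proof. After the Characterization Theorem reduces to $A=H\setminus\{0\}$ for a finite subgroup $H$, the paper simply invokes Theorem 1.2 of \cite{AKP}, which is exactly the combination you unpack: Gordon's theorem when the Sylow $2$-subgroup is nontrivial cyclic, and otherwise the Friedlander--Gordon--Miller $R$-sequenceability statement (completed by Alspach, Kreher and Pastine). One harmless inaccuracy: $\mathbb{Z}_2^2$ is not actually an exception to $R$-sequenceability, since your own ordering $a,b,a+b$ is an $R$-sequencing, but you treat that case by hand so nothing breaks.
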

\proof
Let us assume that $a_i+a_j \in A\cup \{0\}$ for any distinct $i,j$. Then, due to Theorem \ref{prop:another_3}, we either have that $k=1$ in which case $A$ is trivially sequenceable or that $A=H\setminus \{0\}$ for some subgroup $H$ of $G$. In the latter case we have that $A$ is sequenceable due to Theorem 1.2 of \cite{AKP}.
\endproof
\section{Code description}
We describe the algorithm used to establish Propositions~~\ref{prop1} and~\ref{prop2}.
The method is a proof by contradiction: we assume that
$A=\{a_1,\dots,a_k\}\subseteq G\setminus\{0\}$ is a \emph{counterexample},
meaning that every ordering of $A$ has at least one forbidden collision among
its partial sums.

\subsection*{Partial sums and incidence vectors}

Let $x=(x_1,\dots,x_k)$ be an ordering of $A$ and define partial sums
\[
y_0=0,\qquad y_i=\sum_{j=1}^i x_j \quad(1\le i\le k).
\]
A collision $y_s=y_t$ with $0\le s<t\le k$ is equivalent to a consecutive
zero-sum block
\begin{equation}\label{eq:block}
	x_{s+1}+x_{s+2}+\cdots+x_t \;=\; 0.
\end{equation}
Fix a labeling $A=\{a_1,\dots,a_k\}$ and write the ordering as a permutation
$\omega\in\Sym(k)$, so that $x_r=a_{\omega(r)}$.
Given an interval of positions $[s+1,t]$, we encode which labels appear in that
block by the \emph{incidence vector} $v\in\{0,1\}^k$, defined by $v_i=1$ if
$a_i$ occurs somewhere in the block and $v_i=0$ otherwise.  Then
\eqref{eq:block} becomes the label-based relation
\begin{equation}\label{eq:incidence_relation}
	\sum_{i=1}^k v_i\,a_i \;=\; 0.
\end{equation}
Along any branch of the search tree we collect these vectors as rows of a
binary matrix $C\in\{0,1\}^{m\times k}$, where $m$ is the number of recorded
intervals on that branch.  Any integer combination of the rows of $C$ gives a
further linear relation among the $a_i$'s.

\subsection*{Search tree structure}

A \emph{node} of the search tree is a pair $(\omega,C)$ consisting of a current
ordering $\omega$ and the constraint matrix $C$ collected along the branch.
The root is $(\mathrm{id},\emptyset)$, where $\mathrm{id}=(1,2,\dots,k)$.

To generate children of a node $(\omega,C)$, the algorithm considers every
interval of positions $[i,j]$ (with positions indexed from $0$ to $k-1$)
satisfying
\[
1 \le i < j \le k
\qquad\text{and}\qquad
j-i \le \left\lfloor\tfrac{k}{2}\right\rfloor.
\]
The length restriction $j-i\le\lfloor k/2\rfloor$ is used in the zero-sum case;
for general (non-zero-sum) subsets we exclude only the full interval $[0,k-1]$.

For each admissible interval $[i,j]$ the algorithm:
\begin{itemize}
	\item builds its incidence vector $v\in\{0,1\}^k$ by setting
	$v_{\omega(\ell)-1}=1$ for $\ell\in[i,j]$ and $0$ elsewhere;
	\item skips this interval if $v$ already appears as a row of $C$
	(adding a duplicate row gives no new information);
	\item performs one adjacent swap at a boundary of $[i,j]$ to produce a new
	ordering $\omega'$:
	if $i\ge 1$, swap positions $i$ and $i-1$ (left boundary move), and if $i=0$,
	swap positions $j$ and $j+1$ (right boundary move);
	\item creates the child node $(\omega',\,C\cup\{v\})$.
\end{itemize}
Each child receives its own copy of the augmented matrix, so different branches
are independent.

The tree is explored breadth-first (BFS).  A node is not expanded further once
a terminal certificate (described next) is detected.

\subsection*{Terminal certificates}

At each node $(\omega,C)$ the algorithm checks whether the row span of $C$ over
$\mathbb{Q}$ contains one of a small list of vectors.  The check is always the
same: for a candidate $u\in\mathbb{Q}^k$, compare
\[
\text{rank}(C)\quad\text{and}\quad \text{rank}\!\begin{pmatrix}C\\u\end{pmatrix}.
\]
If the two ranks coincide, then $u$ lies in the row span of $C$ and a
contradiction is obtained.

\begin{itemize}
	\item \emph{Zero-element certificate.}
	If $e_i$ lies in the row span of $C$, then clearing denominators gives an
	integer combination of recorded relations equal to $a_i=0$, contradicting
	$A\subseteq G\setminus\{0\}$.
	
	\item \emph{Equality certificate.}
	If $e_i-e_j$ lies in the row span of $C$ for some $i\ne j$, then we obtain
	$a_i=a_j$, contradicting that the elements of $A$ are distinct.
	
	\item \emph{Compression certificate.}
	By the Recursive Lemma there exist $a_1,a_2\in A$ such that
	\[
	A':=(A\setminus\{a_1,a_2\})\cup\{a_1+a_2\}
	\]
	has size $k-1$ and admits a sequencing $x_1,\dots,x_{k-1}$.
	This fixed sequencing implies that certain consecutive blocks in $A'$ are
	\emph{not} zero-sum.  These forbidden blocks can be listed as a finite set of
	binary vectors $\mathtt{initial\_cons}\subset\{0,1\}^k$.
	If any $w\in\mathtt{initial\_cons}$ lies in the row span of $C$, then
	the counterexample assumption forces a relation $\sum_i w_i a_i=0$ of a type
	that cannot occur in the compressed sequencing, giving a contradiction.
\end{itemize}

All three checks are implemented using the same rank test, computed with
standard numerical linear algebra (\texttt{numpy.linalg.matrix\_rank}).

\subsection*{Correctness and completeness}

The algorithm is correct in the following sense: when a terminal certificate is
found, it is a purely linear consequence of the recorded relations
\eqref{eq:incidence_relation}, so it yields a contradiction independently of
the specific abelian group $G$.

It is complete in the following sense: if the BFS exploration finishes with
every branch either reaching a terminal certificate or having no children, then
the counterexample assumption cannot be sustained.  In particular, no set $A$
of the given size can be a counterexample, so every such set is sequenceable.

\subsection*{Complexity and implementation}

At depth $d$ a node has $m=d$ recorded rows.  The number of candidate intervals
is at most $\binom{k}{2}$, reduced by the length restriction and by the
deduplication check.  The rank test costs $O(m^2k)$ per candidate, so a rough
per-node bound is $O(k^2 m^2 k)=O(m^2 k^3)$.  For the values of $k$ considered
here ($k\le 23$), terminal certificates are typically reached after very few
levels, so the practical running time is much smaller than worst-case bounds.

\subsection*{Example  for $k=3$}

We give a minimal example where a contradiction is obtained after recording only
two interval constraints.  Let $A=\{a_1,a_2,a_3\}\subseteq G\setminus\{0\}$ with
pairwise distinct elements.  Start from the root node with ordering
\[
\omega=(1,2,3),\qquad C=\emptyset,
\]
and assume (as in the proof by contradiction) that \emph{every} ordering of $A$
contains a forbidden consecutive zero-sum block.

Choose the interval $[i,j]=[0,1]$ in the ordering $(1,2,3)$.  Its incidence
vector is
\[
v^{(1)}=(1,1,0),
\]
encoding the relation $a_1+a_2=0$.  Since $i=0$, the algorithm performs the
right boundary move and swaps positions $j$ and $j+1$, i.e.\ positions $1$ and
$2$.  This produces the new ordering
\[
\omega'=(1,3,2),
\]
and the child node has constraint matrix $C=\{v^{(1)}\}$.

From this child, choose the interval $[1,2]$ in the ordering $(1,3,2)$.  The
labels in these positions are $\{3,2\}$, so the incidence vector is
\[
v^{(2)}=(0,1,1),
\]
encoding the relation $a_2+a_3=0$.  Now the matrix of recorded constraints is
\[
C=\{(1,1,0),(0,1,1)\}.
\]
At this point we obtain a terminal certificate of \emph{equality} type, since
\[
v^{(1)}-v^{(2)}=(1,1,0)-(0,1,1)=(1,0,-1)=e_1-e_3.
\]
Equivalently, $e_1-e_3$ lies in the row span of $C$, so the algorithm stops.
The corresponding contradiction in $G$ is
\[
(a_1+a_2)-(a_2+a_3)=0 \quad\Longrightarrow\quad a_1-a_3=0 \quad\Longrightarrow\quad a_1=a_3,
\]
which contradicts the assumption that the elements of $A$ are distinct.
\newline

\section{Conclusion}

By running the code \footnote{All the computations were implemented in Python and run on a local server (with 2 TB of RAM and 128 AMD CPU cores). All code used in this work is openly available at the following repository:  \href{https://doi.org/10.5281/zenodo.18997904}{10.5281/zenodo.18997904}.} of the previous section we obtain the following statement. 
\begin{proposition}\label{prop1}
    Let $A'$ be a subset of cardinality $k-1$ consisting of nonzero elements, of an abelian group $G$, summing to zero, 
and suppose $A'$ admits a sequencing $x_1, x_2, \ldots, x_{k-1}$. 
Given two elements $a_1, a_2$ such that
$$
A = \{a_1,\, a_2,\, x_2,\, x_3,\, \ldots,\, x_{k-1}\}
$$
is a set of cardinality $k$ of nonzero elements satisfying $a_1 + a_2 = x_1$, 
we conclude that $A$ admits a sequencing in the following cases:
\begin{itemize}
    \item $k \leq 22$, under the assumption that $\sum_{a \in A} a = 0$;
    \item $k \leq 23$, under the additional assumption that $A$ contains no pair 
    of the form $\{x, -x\}$.
\end{itemize}
\end{proposition}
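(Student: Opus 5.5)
The proof is by contradiction and is certified by the search of the previous section. Suppose $A=\{a_1,a_2,x_2,\dots,x_{k-1}\}$ satisfies the hypotheses but admits no sequencing. Then every ordering $\omega$ of $A$ contains a consecutive block, other than the full block, whose elements sum to zero. For zero-sum $A$ we may assume the block has length at most $\lfloor k/2\rfloor$. Indeed, the complement of a zero-sum block inside a zero-sum sequence is again zero-sum, so either the block itself or its complementary prefix/suffix pair has short length. After a cyclic rotation, which preserves the set of partial sums up to translation, that short part becomes a consecutive block.

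Each such block gives a $0/1$ incidence vector $v$ with $\sum_i v_i a_i=0$ in $G$. This relation is valid in every abelian group, so any rational combination of recorded vectors yields, after clearing denominators, an integer relation among the $a_i$. I will record the relations as rows of a matrix $C$, together with the standing relations $a_1+a_2=x_1$ and $\sum_{a\in A}a=0$.

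A contradiction is reached once the row span of $C$ contains one of the following vectors.
\begin{itemize}
\item $e_i$: this forces $a_i=0$.
\item $e_i-e_j$ with $i\neq j$: this forces two elements of $A$ to coincide.
\item A vector $w\in\mathtt{initial\_cons}$: this is a zero-sum consecutive block of the given sequencing $x_1,\dots,x_{k-1}$ of $A'$, transported back to $A$ by replacing $x_1$ with $a_1+a_2$. It cannot be zero because that sequencing has distinct nonzero partial sums.
\item In the second case, $e_i+e_j$: this is forbidden by the hypothesis that $A$ contains no pair $\{x,-x\}$. This extra certificate is what should allow $k=23$.
\end{itemize}

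The key steps run in this order.
\begin{enumerate}
\item Start from the ordering $(a_1,a_2,x_2,\dots,x_{k-1})$. Its only possible bad blocks are those straddling the split of $x_1$, namely blocks that begin with $a_2$ or end with $a_1$; every other block is already excluded by the sequencing of $A'$.
\item Branch over all admissible bad intervals. For each one, add the corresponding row to $C$ and pass to the ordering obtained by an adjacent swap at the interval boundary. The swap destroys that particular block, so the next bad block provides genuinely new information.
\item Run the branching breadth-first until every leaf carries a certificate.
\end{enumerate}
Since the tree is finite (duplicate rows are skipped), termination with all leaves certified proves the statement for each $k$ in the stated range. The range itself is reached inductively: the Recursive Lemma (Corollary~\ref{col:recursive_lemma}) supplies the smaller sequenceable $A'$ on which each step rests.

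The main obstacle is the size of the search tree for $k=22$ and $k=23$. I would control it in three ways: by the length restriction $\lfloor k/2\rfloor$; by starting at the straddling blocks, where the constraints from $A'$ are densest; and by using exact rational rank computations, or at least verifying each leaf exactly, instead of relying only on floating-point rank. If the breadth-first search still explodes, the first thing to try is to choose at each node the single interval whose swap yields the ordering with the fewest uncertified bad blocks. That keeps the proof a complete case analysis while pruning branches early.
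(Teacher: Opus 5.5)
Your plan follows the paper's own argument: the breadth-first search of Section~3, seeded by the sequencing of $A'$, with the same certificates plus the natural extra certificate $e_i+e_j$ for the second bullet. As you state it, though, two of its steps are not sound. First, $e_i$ lying in the \emph{rational} row span of $C$ does not force $a_i=0$. Write $e_i=\sum_r c_r v_r$ with $c_r\in\mathbb{Q}$ and clear a common denominator $N$; the recorded relations then only give $Na_i=0$. That is no contradiction in a group with elements of order dividing $N$, and the same applies to the certificates $e_i-e_j$, $e_i+e_j$ and $w\in\mathtt{initial\_cons}$. Concretely, the seven $0/1$ incidence vectors of the lines of the Fano plane span $\mathbb{Q}^7$, since the incidence matrix has determinant $\pm 24$. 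Yet the seven nonzero elements of $\mathbb{Z}_2^3$ satisfy all seven relations $a_i+a_j+a_l=0$ while being distinct and nonzero. So the rank test can close branches that are realizable, and exact rational arithmetic does not help. You must test membership in the \emph{integer} row lattice of $C$ (e.g.\ via Hermite normal form), or else restrict to groups with no elements of order dividing the denominators.

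Second, your claim that after the swap ``the next bad block provides genuinely new information'' is false, and skipping duplicate rows makes the tree non-exhaustive. Suppose at a child you take an interval with the same left endpoint $i\ge 1$ as the parent's interval. The swap of positions $i-1,i$ then undoes the previous swap and returns to an earlier ordering, in which an already recorded block is consecutive again. More generally, the new ordering may be bad only through blocks whose relations are already implied by $C$. In that case the counterexample hypothesis gives nothing new, and discarding the case means that ``all leaves certified'' no longer covers all possibilities. To get a genuine case analysis, at each node you must use an ordering none of whose admissible blocks is already implied (over $\mathbb{Z}$) by $C$. Try the swapped ordering first and, if it fails, search for another; if no such ordering exists, the node is unresolved and the computation proves nothing there. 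Then branch over all of that ordering's admissible blocks. Every child strictly enlarges the lattice of relations, which also gives termination. The search as written in the paper has the same two weak points, so appealing to it does not close these gaps.
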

Therefore, by virtue of the Recursive Lemma Theorems 
~\ref{sumzero}  and ~\ref{sumzeroxmx} follow readily.

In the case where $A$ is not zero-sum, we proceed analogously; however, in this case, we can not assume that the merged elements sum to $x_1$ since the ordering is not cyclic and we have to consider $k-1$ cases. Anyway, by means of the computational search, we obtain the following proposition.

\begin{proposition}\label{prop2}
    Let $A'$ be a subset of cardinality $k-1$ consisting of nonzero elements, of an abelian group $G$ and suppose $A'$ admits a sequencing $x_1, x_2, \ldots, x_{k-1}$. 
Given two elements $a_1, a_2$ such that
$$
A = \{a_1,\, a_2,\, x_1,\, x_2,\, \ldots,\,x_{i-1},x_{i+1},\, \ldots,\, x_{k-1}\}
$$
is a set of cardinality $k$ of nonzero elements satisfying $a_1 + a_2 = x_i$ for some $i$ in $[1,k-1]$, 
we conclude that $A$ admits a sequencing whenever $k\leq 20$.
\end{proposition}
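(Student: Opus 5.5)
We prove Proposition~\ref{prop2} by contradiction, and the contradiction is certified by the search of the previous section. Fix $k\le 20$ and $i\in[1,k-1]$. Suppose $A$ as in the statement admits no sequencing. Label the elements as $a_1,a_2$ (the merged pair) followed by the remaining $x_\ell$, $\ell\neq i$, in their order. The only information we retain about the group is encoded linearly over $\mathbb{Z}$. Every ordering of $A$ has a zero-sum consecutive block other than the full interval, and each such block gives an incidence vector $v$ with $\sum_j v_j a_j=0$. Dually, the given sequencing of $A'$ supplies forbidden relations. For every proper consecutive block $x_{s+1}+\dots+x_t$ of the sequencing of $A'$, we expand $x_i$ into $a_1+a_2$ when $i\in[s+1,t]$. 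The resulting binary vector $w\in\{0,1\}^k$ must satisfy $\sum_j w_j a_j\neq 0$. These vectors form the list $\mathtt{initial\_cons}$ for this value of $i$. We also record the vectors $e_j$ and $e_j-e_{j'}$, whose vanishing would contradict that the elements are nonzero and distinct.

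Next we run the BFS described above, starting from a natural ordering. A convenient choice is $a_1,a_2$ placed at position $i$ within $x_1,\dots,x_{k-1}$, since then most blocks are already excluded by $\mathtt{initial\_cons}$. At each node, the counterexample hypothesis guarantees that the current ordering contains some zero-sum proper interval. We do not know which one, so we branch over all admissible intervals. The full interval $[0,k-1]$ is excluded because $A$ need not be zero-sum, and intervals whose vector is already a row of $C$ are skipped. Each branch adds the interval's vector to $C$ and applies the boundary swap. Branching over all candidates is exactly what makes a closed tree a valid proof. The adversary's zero-sum block lies in one of the branches, and on every branch we eventually reach a vector $u$ in the $\mathbb{Q}$-row span of $C$ that is forbidden: a zero-element, equality, or compression certificate. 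Clearing denominators gives an integer relation $\sum_j N u_j a_j=0$.

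The main subtlety is this step. Over a general abelian group, $Nu\cdot a=0$ does not imply $u\cdot a=0$, because of torsion. So the certificate must be checked over $\mathbb{Z}$, by requiring that $u$ itself, not merely a multiple of it, lie in the $\mathbb{Z}$-row span of $C$. Alternatively, one confirms that the rational combinations found have unit denominators. I would first redo the rank tests with exact integer arithmetic, using a Hermite normal form for membership in the lattice, instead of floating-point \texttt{matrix\_rank}. Any branch that closes only rationally would be re-expanded further. With this done, each closed branch refutes the assumed zero-sum block, and since every ordering reached has such a block in some branch, the whole tree yields the contradiction.

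Finally, we repeat the process for every $i\in[1,k-1]$ and every $k\le 20$. Unlike the zero-sum case, we cannot assume by cyclic rotation that the merged element is $x_1$, which is the source of the $k-1$ cases. The main practical obstacle is the size of the tree for $k=19,20$ with the weaker interval restriction, since intervals of all lengths are admissible. The order of candidate intervals and the initial ordering must be chosen so the BFS terminates within the available memory. The bound $20$ is simply where this computation still closes.
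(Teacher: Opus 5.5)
Your plan is the paper's own argument: the same breadth-first search with zero-element, equality and compression certificates, run separately for each $i\in[1,k-1]$ with only the full interval excluded. Your torsion remark is a genuine correction to the paper. The paper tests membership in the $\mathbb{Q}$-row span and asserts that clearing denominators gives $a_i=0$, whereas it only gives $Na_i=0$. This is not hypothetical. The rows $e_1+e_2+e_3$, $e_3+e_4+e_5$, $e_1+e_2+e_4+e_5$ have $2e_3$ in their $\mathbb{Z}$-span, so $e_3$ passes the rank test. Yet all three relations hold for $a_1,\dots,a_5$ equal to $(1,0,0),(0,1,0),(1,1,0),(0,0,1),(1,1,1)$ in $\mathbb{Z}_2^3$, which are distinct and nonzero. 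Requiring $\mathbb{Z}$-span membership is the right repair. However, the paper's reported run then no longer certifies $k\le 20$ for your version; your argument is complete only once the stricter search has actually been rerun and closes.

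The genuine gap is in your exhaustiveness step. You keep the rule that an interval whose vector is already a row of $C$ is skipped, and you also assert that the adversary's zero-sum block always lies in one of the branches. These two statements are incompatible, because orderings along a branch can contain previously recorded blocks. From $\omega$ take $[i,j]$ with $i\ge 1$ and swap positions $i-1,i$. In the new ordering take $[i,j']$ with $j'\neq j$; the rule swaps the same two positions back. You are at $\omega$ again, the block in positions $[i,j]$ has its vector already in $C$, and in general no certificate is available. Suppose the counterexample's only zero-sum blocks in this ordering are already recorded. Then the node is consistent with $A$ while none of its children is, so every child can close with a certificate and the tree closes without $A$ ever being refuted. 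To make a closed tree into a proof, you must either keep such intervals as branches (perform the swap with $C$ unchanged, with some loop control), or, more cleanly, move at each node to an ordering none of whose proper blocks has its vector in the $\mathbb{Z}$-span of $C$. Then the zero-sum block the counterexample is forced to have is genuinely new, and branching over all blocks really is exhaustive. The paper's description of the algorithm has the same defect, so its computation does not settle this point for you either.
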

Therefore, also in this case, by the Recursive Lemma Theorem \ref{thm:main} follows.

To stress the effectiveness of our recursive approach, it is worth noting that a pure tree-algorithm (i.e. considering only the first two terminal certificates), without the Recursive Lemma, is limited to sets of size $|A| \leq 10$, consistently with the current literature (see \cite{AL20}, which arrives at size $9$).

\section*{Acknowledgements}
The first and second authors were partially supported by INdAM--GNSAGA. The last author has been supported by the I+D+i project PID2023-147202NB-I00 funded by MICIU/AEI/10.13039/501100011033.

\end{document}